\documentclass[reqno, 12pt,dvipsnames ]{amsart}

\usepackage[a4paper,margin=1.15in]{geometry}
\usepackage{amsmath,amssymb,amsthm,mathtools}
\usepackage{enumitem}
\usepackage{microtype}

\usepackage{xcolor}

\usepackage[colorlinks=true,
            linkcolor=black,
            citecolor=blue,
            urlcolor=blue]{hyperref}

\newtheorem{theorem}{Theorem}[section]
\newtheorem{proposition}[theorem]{Proposition}
\newtheorem{lemma}[theorem]{Lemma}
\newtheorem{corollary}[theorem]{Corollary}
\newtheorem{remark}[theorem]{Remark}
\theoremstyle{definition}
\newtheorem{definition}[theorem]{Definition}

\newcommand{\C}{\mathbb C}
\newcommand{\R}{\mathbb R}
\newcommand{\Pj}{\mathbb P}
\newcommand{\HH}{\mathcal H}
\newcommand{\Gr}{\operatorname{Gr}_{\C}}

\title{A Chow-Type theorem from asymptotic directions}
\author{Nhan Nguyen}
\address{FPT University, Danang, Vietnam}
\email{nguyenxuanvietnhan@gmail.com}

\subjclass[2020]{32B15, 28A78, 14P10, 03C64}
\keywords{Complex analytic set; algebraic set; tangent cone at infinity; asymptotic direction; Hausdorff measure; Chow theorem; o-minimal structure}

\begin{document}

\maketitle

\begin{abstract}
Let $X\subset \C^n$ be a closed pure $d$-dimensional complex analytic set. We associate to $X$ its set of projective asymptotic directions
$$
 \Sigma_\infty(X)
:=\{\ell\in \Pj^{n-1}(\C):\ell\cap  C_\infty(X)\ne\{0\}\},
$$
where $ C_\infty(X)$ is the total tangent cone at infinity. We prove the metric Chow-type characterization
$$
X\text{ is algebraic}
\quad\Longleftrightarrow\quad
\HH^{2d}\bigl( \Sigma_\infty(X)\bigr)=0.
$$
As an application, if $C_\infty(X)$ is definable in an o-minimal expansion
of $(\mathbb R,+,\cdot)$ and $
\dim_{\mathbb R} C_\infty(X)\le 2d$,
then $X$ is algebraic.
\end{abstract}

\section{Introduction}
Chow's theorem  is one of the fundamental links between complex analytic and
algebraic geometry: every closed complex analytic subset of complex
projective space is algebraic \cite{Chow49, Chirka89}. In affine space, the situation is quite
different. Closed complex analytic subsets of $\C^n$ may be transcendental,
and algebraicity can only be recovered by imposing some global restriction
on their behavior at infinity.

Several classical results express this idea in different ways. Rudin
characterized algebraic analytic sets by polynomial growth with respect to a
suitable linear projection \cite{Rudin67}. Bishop and Stoll obtained a characterization
in terms of the optimal Euclidean volume growth $O(r^{2d})$ for a pure
$d$-dimensional analytic set \cite{Bishop64, Stoll, Chirka89}.

Tangent cones at infinity provide another natural description of asymptotic geometry. For complex algebraic sets, Le and Pham \cite{Le-Pham18} proved that the geometric tangent cone at infinity coincides with the algebraic cone defined by the highest-degree terms of defining equations. Dias and Ribeiro \cite{Dias22}, and latter Sampaio \cite{Sampaio23} obtained Chow-type results
in which algebraicity is detected by algebraic tangent cone at infinity.

A different source of affine algebraicity criteria comes from tame geometry.
Peterzil and Starchenko \cite{Peterzil09} proved that a closed complex analytic subset of
$\C^n$ which is definable in an o-minimal structure is
algebraic. Brosnan \cite{Brosnan23} later gave a short proof  by
combining the Bishop--Stoll criterion  with polynomial volume estimates for
definable sets. 

This paper introduces a much weaker condition at infinity. Instead of requiring the analytic set, or even its tangent cone to be algebraic or definable, we look only at the set of directions at infinity $\Sigma_\infty(X)$. Our main result (Theorem \ref{thm_metric-chow}), together with the algebraic case
recalled in Remark \ref{rem:algebraic-cone}, yields the characterization
$$
X\text{ is algebraic}
\quad\Longleftrightarrow\quad \HH^{2d}(\Sigma_\infty(X))=0
$$
(see Theorem \ref{thm_characterization}).
In particular, for $d > 0$, there is a clear size gap: if $X$ is algebraic, $\Sigma_\infty(X)$ has Hausdorff dimension at most $2d-2$, but if $X$ is nonalgebraic, its dimension is at least $2d$ (Corollary \ref{cor:dimension-gap}).

This intuition becomes much clearer when we look at complementary linear spaces. As shown in Corollary \ref{cor_intersection-all-direction1}, if a pure $d$-dimensional analytic
set is nonalgebraic, then every complex $(n-d)$-plane contains a nonzero
tangent vector at infinity. Equivalently, its projective direction set meets
every projective $(n-d-1)$-plane. For hypersurfaces this means that a
nonalgebraic analytic hypersurface has every projective direction at infinity (Corollary \ref{cor_intersection-all-direction2}). Consequently, the graph of a nonpolynomial entire function
$f:\C^d\to\C$ has all directions of $\mathbb{P}^d(\C)$ at infinity.

The proof is based on two simple ingredients. First, a Hausdorff-measure
estimate on the complex Grassmannian shows that if the projective direction
set is sufficiently small, then there exists a complementary complex plane
which avoids all nonzero tangent vectors at infinity. Second, such an
avoiding plane forces a linear growth estimate for the analytic set.
Rudin's geometric criterion then gives algebraicity. 

The o-minimal result which originally motivated this work becomes an
immediate consequence of the metric criterion. If the tangent cone at
infinity is definable and has real dimension at most $2d$, then its
projective direction set has Hausdorff dimension at most $2d-1$, and hence
the analytic set is algebraic (Theorem \ref{thm:definable-cone}). This extends earlier results of Dias--Ribeiro \cite{Dias22} and Sampaio \cite{Sampaio23} by assuming $C_\infty(X)$ is definable rather than algebraic. It also strengthens Peterzil--Starchenko’s affine definable Chow theorem \cite{Peterzil09} : instead of requiring $X$ to be definable, we only need $C_\infty(X)$ to be definable.

The paper is organized as follows. Section \ref{sec2} develops the geometry of
asymptotic directions and proves the main algebraicity criterion.
Section \ref{sec3} contains the geometric and o-minimal applications.

Throughout the paper, we identify $\C^n\equiv\R^{2n}$.  We write $S^{2n-1}$ for the
unit sphere in $\C^n$, $\Pj^{n-1}(\C)$ for the space of complex lines through the origin in $\C^n$, and $\Gr(k,n)$
for the Grassmannian of $k$-dimensional complex linear subspaces of
$\C^n$. We denote by $\HH^s$ the $s$-dimensional Hausdorff measure,
by $\dim_H$ Hausdorff dimension, and by $\dim_{\R}$ and $\dim_{\C}$ real and complex dimensions, respectively. Hausdorff measures on compact smooth manifolds are taken with respect to fixed smooth Riemannian metrics. Since any two such metrics are equivalent on a compact manifold, the property of having zero $s$-dimensional Hausdorff measure is metric independent.

\section{Asymptotic directions and algebraicity}\label{sec2}
\subsection{Asymptotic directions}

\begin{definition}\label{def:tangent-infinity}\rm
Let $A\subset\C^n$. The \emph{total tangent cone at infinity} of $A$ is
$$
 C_\infty(A)
:=
\{
 v\in\C^n:
 \exists\,x_j\in A,\ t_j>0,\ \|x_j\|\to\infty,\ t_j\to\infty, \, x_j/t_j\to v\}$$
Its \emph{spherical asymptotic direction set} is
$$
 D_\infty(A):= C_\infty(A)\cap S^{2n-1}.
$$
The
\emph{projective asymptotic direction set} of $A$ is
$$
 \Sigma_\infty(A)
:=
\{\ell\in\Pj^{n-1}(\C):\ell\cap C_\infty(A)\ne\{0\}\}.
$$
Equivalently, if
$$
\phi:\C^n\setminus\{0\}\longrightarrow\Pj^{n-1}(\C),
\qquad
\phi (v)=\C v,
$$
is the canonical projection, then
$$
 \Sigma_\infty(A)=\phi\bigl( C_\infty(A)\setminus\{0\}\bigr) = \phi(D_\infty(A)).
$$
For a nonzero complex linear subspace $L\subset\C^n$, we write
$$
\Pj(L):=\{\ell\in\Pj^{n-1}(\C):\ell\subset L\}.
$$
Thus, if $\dim_{\C}L=k$, then $\Pj(L)\simeq\Pj^{k-1}(\C)$.
\end{definition}

It is immediately follows from definition that: 
\begin{lemma}\label{lem:projective-affine-bridge}
Let $A\subset\C^n$ and let $L\subset\C^n$ be a nonzero complex linear subspace. Then
$
\Pj(L)\cap \Sigma_\infty(A)\ne\varnothing
\quad\Longleftrightarrow\quad
L\cap C_\infty(A)\ne\{0\}.
$
\end{lemma}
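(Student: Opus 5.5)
The plan is to unwind the definitions on both sides and check the two implications directly. The only structural fact needed is that $C_\infty(A)$ is invariant under positive real scalings: if $v\in C_\infty(A)$ is witnessed by $x_j\in A$ and $t_j\to\infty$ with $x_j/t_j\to v$, then for $\lambda>0$ the sequence $t_j/\lambda$ still tends to $\infty$ and $x_j/(t_j/\lambda)\to\lambda v$. Hence $\lambda v\in C_\infty(A)$. Note that $C_\infty(A)$ need not be invariant under complex scalars, but that will not be required.

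For ($\Rightarrow$), suppose $\ell\in\Pj(L)\cap\Sigma_\infty(A)$. By the definition of $\Sigma_\infty(A)$ there is a nonzero $v\in\ell\cap C_\infty(A)$. Since $\ell\subset L$, we get $v\in L\cap C_\infty(A)$ with $v\neq0$, so $L\cap C_\infty(A)\neq\{0\}$.

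For ($\Leftarrow$), pick a nonzero $v\in L\cap C_\infty(A)$ and set $\ell:=\phi(v)=\C v$. Because $L$ is a complex linear subspace containing $v$, we have $\ell\subset L$, i.e.\ $\ell\in\Pj(L)$. Also $v\in\ell\cap C_\infty(A)$ and $v\ne0$, so $\ell\in\Sigma_\infty(A)$. Equivalently, $\ell=\phi(v)\in\phi(C_\infty(A)\setminus\{0\})=\Sigma_\infty(A)$.

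There is essentially no obstacle here. The only point to watch is that $\Pj(L)$ consists of complex lines, so membership $\ell\subset L$ uses complex linearity of $L$. The conditions ``$\ell\cap C_\infty(A)\neq\{0\}$'' and ``contains a nonzero vector of $C_\infty(A)$'' are literally the same, so the positive-scaling remark is only needed if one prefers to phrase things via $D_\infty(A)$ (normalizing $v$ to $v/\|v\|$).
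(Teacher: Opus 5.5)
Your proof is correct and is the same direct unwinding of the definitions that the paper intends; the paper calls the lemma immediate from the definitions and gives no proof. One small caveat concerns your closing remark that $\ell\cap C_\infty(A)\ne\{0\}$ and $\ell\cap(C_\infty(A)\setminus\{0\})\ne\varnothing$ are literally the same. This holds only when $0\in C_\infty(A)$, i.e.\ when $A$ is unbounded. For bounded $A$ one has $C_\infty(A)=\varnothing$, so you must read both sides of the lemma in the nonzero-vector sense, as the paper's formula $\Sigma_\infty(A)=\phi(C_\infty(A)\setminus\{0\})$ indicates. Under that reading both sides are false and the equivalence still holds.
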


\begin{remark}\label{rem:algebraic-cone}\rm
If $X\subset\C^n$ is algebraic, then the tangent cone at infinity
defined above coincides with the algebraic tangent cone at infinity;
see \cite{Le-Pham18}. In particular,
$C_\infty(X)$ is a complex algebraic cone. If $X$ is pure
$d$-dimensional and $d>0$, then
\[
\dim_{\C}C_\infty(X)=d,
\]
and therefore $\Sigma_\infty(X)$ is a complex projective algebraic
set of dimension $d-1$.
\end{remark}

The following elementary observation.

\begin{lemma}\label{lem_spherical-directions}
Let $A\subset\C^n$ be unbounded. Then
$$
 D_\infty(A)
=
\left\{
 u\in S^{2n-1}:
 \exists\,x_j\in A,\ \|x_j\|\to\infty,
 \frac{x_j}{\|x_j\|}\to u
\right\}.
$$
Moreover,
$$
 C_\infty(A)=\R_{\ge0}\, D_\infty(A).
$$
In particular, $ D_\infty(A)$ is a nonempty compact subset of the sphere and $ \Sigma_\infty(A)$ is compact.
\end{lemma}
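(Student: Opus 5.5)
We prove the two set equalities by a rescaling argument, then read off compactness and nonemptiness. Write $D'$ for the right-hand side of the first display, the set of limits of normalized points $x_j/\|x_j\|$ with $\|x_j\|\to\infty$.

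\emph{Step 1: $D'\subset D_\infty(A)$.} Given $u\in D'$ with witnesses $x_j$, take $t_j=\|x_j\|$. Then $t_j\to\infty$ and $x_j/t_j\to u$. Hence $u\in C_\infty(A)$, and since $\|u\|=1$ we get $u\in D_\infty(A)$.

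\emph{Step 2: $D_\infty(A)\subset D'$.} Let $u\in D_\infty(A)$, with $x_j\in A$, $t_j\to\infty$, $\|x_j\|\to\infty$ and $x_j/t_j\to u$. Taking norms gives $\|x_j\|/t_j\to\|u\|=1$. Therefore
\[
\frac{x_j}{\|x_j\|}=\frac{x_j/t_j}{\|x_j\|/t_j}\longrightarrow u,
\]
so $u\in D'$. This proves the first equality.

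\emph{Step 3: $C_\infty(A)=\R_{\ge0}\,D_\infty(A)$.} The cone $C_\infty(A)$ is invariant under multiplication by $\lambda>0$: replace $t_j$ by $t_j/\lambda$. This gives $\R_{>0}D_\infty(A)\subset C_\infty(A)$.

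Next we show $0\in C_\infty(A)$. Since $A$ is unbounded, choose $x_j\in A$ with $\|x_j\|\to\infty$ and set $t_j=\|x_j\|^2$. Then $\|x_j/t_j\|=1/\|x_j\|\to0$. (The factor $0\cdot D_\infty(A)$ also requires $D_\infty(A)\neq\varnothing$, which Step 4 supplies.)

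Conversely, let $v\in C_\infty(A)$ with $v\neq0$, and take witnesses $x_j, t_j$. Then
\[
\frac{x_j}{\|x_j\|}=\frac{x_j/t_j}{\|x_j/t_j\|}\longrightarrow \frac{v}{\|v\|},
\]
so $v/\|v\|\in D'=D_\infty(A)$ by Step 2. Hence $v=\|v\|\cdot(v/\|v\|)\in\R_{\ge0}D_\infty(A)$.

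\emph{Step 4: nonemptiness and compactness.} Nonemptiness follows from compactness of $S^{2n-1}$: pick $x_j\in A$ with $\|x_j\|\to\infty$, and extract a convergent subsequence of $x_j/\|x_j\|$; its limit lies in $D'$.

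For closedness of $D'$, use a diagonal argument. Let $u_k\in D'$ with $u_k\to u$. For each $k$, choose $x_k\in A$ with $\|x_k\|>k$ and $\|x_k/\|x_k\|-u_k\|<1/k$. Then $x_k/\|x_k\|\to u$, so $u\in D'$. Being a closed subset of the sphere, $D_\infty(A)$ is compact.

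Finally, $\Sigma_\infty(A)=\phi(D_\infty(A))$ by Definition~\ref{def:tangent-infinity}. As the continuous image of a compact set, $\Sigma_\infty(A)$ is compact.

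The only mildly delicate points are these: normalizing by the norm requires $v\ne0$, and including $0\in C_\infty(A)$ requires choosing $t_j$ growing faster than $\|x_j\|$. The rest is routine.
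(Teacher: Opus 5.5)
Your proof is correct and follows the paper's own argument. You use the same rescalings $t_j=\|x_j\|$, $t_j/\lambda$ (the paper's $t_j=\|x_j\|/r$) and $t_j=\|x_j\|^2$, together with compactness of the sphere. Beyond the paper, you spell out three points it leaves implicit: the reverse inclusion $C_\infty(A)\subset\R_{\ge0}D_\infty(A)$, the need for $D_\infty(A)\neq\varnothing$ to get $0\in\R_{\ge0}D_\infty(A)$, and the diagonal argument showing $D_\infty(A)$ is closed.
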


\begin{proof}
If $u\in D_\infty(A)$, choose $x_j\in A$ and $t_j\to\infty$ with $x_j/t_j\to u$ and $\|u\|=1$. Then
$$
\frac{\|x_j\|}{t_j}\longrightarrow1,
\qquad
\frac{x_j}{\|x_j\|}
=
\frac{x_j/t_j}{\|x_j\|/t_j}
\longrightarrow u.
$$
Conversely, if $x_j/\|x_j\|\to u$, take $t_j=\|x_j\|$.

The same argument, with $t_j=\|x_j\|/r$, shows that $ru\in C_\infty(A)$ for every $r>0$ and $u\in D_\infty(A)$. Also $0\in C_\infty(A)$ by taking $t_j=\|x_j\|^2$. This gives the cone formula. 

Finally, since $ D_\infty(A)$ is  the set of limit points on the compact sphere of normalized points of $A$ tending to infinity, hence it is nonempty and compact. Its image under the continuous map $u\mapsto\C u$ from $S^{2n-1}$ to $\Pj^{n-1}(\C)$ is therefore compact. This image is precisely $ \Sigma_\infty(A)$.
\end{proof}

\subsection{Avoiding planes and algebraicity}
Let $1\le d<n$ and put $$
G:=\Gr(n-d,n).$$ Since $\dim_{\C}G=d(n-d)$,   $\dim_{\R}G=2d(n-d)$. For a subset $E\subset\Pj^{n-1}(\C)$ define
$$
\mathcal B(E)
:=\{L\in G:\Pj(L)\cap E\ne\varnothing\}.
$$

\begin{proposition}\label{prop:incidence}
Let $E\subset\Pj^{n-1}(\C)$. If
$\HH^{2d}(E)=0$
then
$\HH^{2d(n-d)}\bigl(\mathcal B(E)\bigr)=0$.
In particular, almost every $L\in\Gr(n-d,n)$ satisfies $
\Pj(L)\cap E=\varnothing$.
\end{proposition}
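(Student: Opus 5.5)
Consider the incidence correspondence
$$
I:=\{(\ell,L)\in \Pj^{n-1}(\C)\times G:\ \ell\subset L\}.
$$
It is a compact smooth complex submanifold. The first projection $p_1:I\to\Pj^{n-1}(\C)$ is a holomorphic fibre bundle whose fibre over $\ell$ is $\{L\in G: L\supset \ell\}\simeq \Gr(n-d-1,n-1)$, of complex dimension $(n-d-1)d$. Hence $\dim_{\R} I = 2(n-1)+2d(n-d-1)$. The second projection $p_2:I\to G$ is smooth, and by definition
$$
\mathcal B(E)=p_2\bigl(p_1^{-1}(E)\bigr).
$$
The plan is to show $\HH^{2d+2d(n-d-1)}(p_1^{-1}(E))=0$, and then push forward by the Lipschitz map $p_2$. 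Note that $2d+2d(n-d-1)=2d(n-d)$ is exactly $\dim_{\R}G$. This numerology is the reason for the exponent $2d$ in the hypothesis.

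\textbf{Step 1 (local product structure).} Cover $\Pj^{n-1}(\C)$ by finitely many compact sets $K_i$ over which $p_1$ is trivial. This gives diffeomorphisms $p_1^{-1}(K_i)\cong K_i\times F$ with $F=\Gr(n-d-1,n-1)$ compact of real dimension $m:=2d(n-d-1)$. Since everything is compact and smooth, these trivializations are bi-Lipschitz with respect to the fixed Riemannian metrics. Therefore it suffices to show: if $\HH^{2d}(E')=0$ with $E'\subset K_i$, then $\HH^{2d+m}(E'\times F)=0$.

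\textbf{Step 2 (product estimate).} This is the one genuinely measure-theoretic point. I would do it by hand with covers. Given $\varepsilon>0$, cover $E'$ by sets $U_k$ of diameter $r_k<\delta$ with $\sum r_k^{2d}<\varepsilon$. The compact $m$-manifold $F$ can be covered by at most $C r^{-m}$ sets of diameter $\le r$, for every small $r$, with $C$ depending only on $F$. Taking products, $E'\times F$ is covered by sets of diameter $\lesssim r_k$, whose number is at most $C r_k^{-m}$ for each $k$. The total $(2d+m)$-content is then
$$
\lesssim \sum_k C\, r_k^{-m}\, r_k^{2d+m} = C\sum_k r_k^{2d} < C\varepsilon .
$$
Hence $\HH^{2d+m}(E'\times F)=0$.

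\textbf{Step 3 (conclusion).} Summing over the finitely many charts gives $\HH^{2d(n-d)}(p_1^{-1}(E))=0$. Since $p_2$ is smooth on a compact manifold, it is Lipschitz, and Lipschitz maps do not increase $\HH^s$-null sets. So $\HH^{2d(n-d)}(\mathcal B(E))=0$.

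For the last claim: $\HH^{2d(n-d)}$ on the compact $2d(n-d)$-dimensional manifold $G$ is comparable to the Riemannian volume. So $\mathcal B(E)$ has volume zero, and almost every $L$ satisfies $\Pj(L)\cap E=\varnothing$. In particular such $L$ exist.

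I expect the only delicate point to be Step 2, the product inequality $\HH^{s+m}(A\times F)=0$ when $\HH^s(A)=0$ and $F$ is an $m$-manifold. It is handled by the covering count above, which uses the uniform box-counting bound for compact manifolds. The key is that no measurability of $E$ is needed, since only outer measures and covers enter.
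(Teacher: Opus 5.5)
Your proof is correct and follows essentially the same route as the paper. Both use the incidence correspondence, bi-Lipschitz local trivializations of the bundle over $\Pj^{n-1}(\C)$ with fiber $\Gr(n-d-1,n-1)$, the count $2d+2d(n-d-1)=2d(n-d)$, and Lipschitz pushforward by the second projection. The only difference is that you prove the product estimate $\HH^{2d+m}(E'\times F)=0$ explicitly by a covering count, whereas the paper cites it as standard.
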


\begin{proof}
Consider the compact complex manifold
$$
\mathcal I
:=
\{(\ell,L)\in\Pj^{n-1}(\C)\times G:\ell\subset L\},
$$
with projections
$$
q:\mathcal I\to\Pj^{n-1}(\C),
\qquad
\pi:\mathcal I\to G.
$$
For a fixed complex line $\ell\in\Pj^{n-1}(\C)$, the fiber $q^{-1}(\ell)$ consists of the complex $(n-d)$-planes containing $\ell$. Passing to the quotient by $\ell$ gives a natural identification
$$
q^{-1}(\ell)
\simeq
\Gr(n-d-1,n-1).
$$
The projection $q$ is the Grassmannian bundle associated with the quotient bundle over $\Pj^{n-1}(\C)$. In particular, $q$ is a smooth fiber bundle with compact fiber
$$
F:=\Gr(n-d-1,n-1),
\qquad
\dim_{\R}F=m:=2d(n-d-1).
$$
Set
$$
\mathcal I_E:=q^{-1}(E).$$
Since the base is compact, choose finitely many bundle-trivializing open sets $U_i$ and relatively compact coordinate neighborhoods $V_i \Subset U_i$ whose union covers $\Pj^{n-1}(\C)$. Let
$$
\Phi_i:q^{-1}(U_i)\longrightarrow U_i\times F
$$
be a smooth bundle trivialization. On the compact sets $q^{-1}(\overline{V_i})$ and $\overline{V_i}\times F$, both $\Phi_i$ and $\Phi_i^{-1}$ have uniformly bounded differentials. Hence, the restricted maps
$$
\Phi_i:q^{-1}(\overline{V_i})\longrightarrow\overline{V_i}\times F
$$
are bi-Lipschitz. Consequently, $
q^{-1}(E\cap V_i)$
is bi-Lipschitz equivalent to
$(E\cap V_i)\times F$.
Since $F$ is a compact $m$-dimensional $C^1$ manifold and $\HH^{2d}(E) = 0$, $\HH^{2d+m}\bigl((E\cap V_i)\times F\bigr)=0$, and hence $\HH^{2d+m} (q^{-1}(V_i \cap E))=0$ (because bi-Lipschitz maps preserve Hausdorff-null sets in a fixed dimension; see, for example \cite{Federer69}). Since the $\{V_i\cap E\}$ is a finite cover of $E$, it follows that
$\HH^{2d+m}(\mathcal I_E)=0$.
Note that
$$
2d+m
=2d+2d(n-d-1)
=2d(n-d)
=\dim_{\R}G.
$$
We then have $\HH^{2d(n-d)}(\mathcal I_E)=0$.

Finally, the projection $\pi:\mathcal I\to G$ is smooth between compact Riemannian manifolds and hence Lipschitz. Therefore $\HH^{2d(n-d)}\bigl(\pi(\mathcal I_E)\bigr)=0$.
Since
$$
\pi(\mathcal I_E)
=
\{L\in G:\Pj(L)\cap E\ne\varnothing\}
=
\mathcal B(E),
$$
the proof is complete.
\end{proof}

Applying Proposition \ref{prop:incidence} to $E= \Sigma_\infty(A)$ gives:

\begin{corollary}\label{cor:avoiding-plane}
Let $A\subset\C^n$ be unbounded and let $1\le d<n$. If $\HH^{2d}\bigl( \Sigma_\infty(A)\bigr)=0$,
then for almost every $L\in\Gr(n-d,n)$, $L\cap C_\infty(A)=\{0\}$.
\end{corollary}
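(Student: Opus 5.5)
The plan is to combine Proposition \ref{prop:incidence} with Lemma \ref{lem:projective-affine-bridge}, so there are essentially two steps.

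First, since $A$ is unbounded, Lemma \ref{lem_spherical-directions} says that $\Sigma_\infty(A)$ is a well-defined compact subset of $\Pj^{n-1}(\C)$. Put $E:=\Sigma_\infty(A)$. By hypothesis $\HH^{2d}(E)=0$, so Proposition \ref{prop:incidence} gives
$$
\HH^{2d(n-d)}\bigl(\mathcal B(E)\bigr)=0 .
$$
Since $\dim_{\R}\Gr(n-d,n)=2d(n-d)$, the measure $\HH^{2d(n-d)}$ on $\Gr(n-d,n)$, taken with respect to a fixed smooth Riemannian metric, is comparable to the Riemannian volume measure. Hence $\mathcal B(E)$ is a null set in the natural sense. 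Because $E$ is compact and the incidence variety is closed, $\mathcal B(E)=\pi(q^{-1}(E))$ is also compact, so measurability is not an issue.

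Second, take any $L\in\Gr(n-d,n)\setminus\mathcal B(E)$. By definition of $\mathcal B(E)$ we have $\Pj(L)\cap\Sigma_\infty(A)=\varnothing$. Applying Lemma \ref{lem:projective-affine-bridge} to the nonzero subspace $L$ turns this into $L\cap C_\infty(A)=\{0\}$. So the conclusion holds for every $L$ outside a null set, i.e.\ for almost every $L$.

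The only point needing care is what ``almost every'' means: it is with respect to the full-dimensional Hausdorff measure, equivalently the invariant volume on the Grassmannian. This follows from the equivalence of metrics on a compact manifold noted in the introduction. As a by-product, the set of avoiding planes is nonempty, and in fact open and dense, since its complement $\mathcal B(E)$ is compact and has measure zero.
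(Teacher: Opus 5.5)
Your proposal is correct and follows the paper's route exactly: apply Proposition \ref{prop:incidence} with $E=\Sigma_\infty(A)$, then use Lemma \ref{lem:projective-affine-bridge} to turn $\Pj(L)\cap E=\varnothing$ into $L\cap C_\infty(A)=\{0\}$. Your additional observations are also sound, though the paper leaves them implicit: $\mathcal B(E)=\pi(q^{-1}(E))$ is compact, so the avoiding planes form an open dense set.
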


\begin{proposition}\label{prop:linear-growth}
Let $A\subset\C^n$ and let $L\subset\C^n$ be a complex linear subspace such that $
L\cap C_\infty(A)\subset\{0\}$.
Let $V$ be any complex linear complement of $L$ with  $\C^n=V\oplus L$.
Then, there exists $M>0$ such that every $z=x+y\in A$, with $x\in V$ and $y\in L$, satisfies
$$
\|y\|\le M(1+\|x\|).
$$
\end{proposition}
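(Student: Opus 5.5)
We argue by contradiction, using the sequential description of $C_\infty(A)$ from Definition \ref{def:tangent-infinity} and Lemma \ref{lem_spherical-directions}. Suppose no such $M$ exists. Then for every $j\in\mathbb N$ there is a point $z_j=x_j+y_j\in A$, with $x_j\in V$ and $y_j\in L$, such that
$$
\|y_j\|> j\,(1+\|x_j\|).
$$
In particular $\|y_j\|\to\infty$, hence $\|z_j\|\ge\|y_j\|-\|x_j\|>\|y_j\|(1-1/j)\to\infty$. So $A$ is unbounded and the sequence $(z_j)$ escapes to infinity.

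We rescale by $t_j:=\|y_j\|\to\infty$. Then
$$
\frac{z_j}{t_j}=\frac{x_j}{\|y_j\|}+\frac{y_j}{\|y_j\|}.
$$
The first term tends to $0$, since $\|x_j\|/\|y_j\|<1/j$. The second term lies on the unit sphere of $L$, which is compact, so after passing to a subsequence $y_j/\|y_j\|\to u$ with $u\in L$ and $\|u\|=1$. Therefore $z_j/t_j\to u$, with $z_j\in A$, $\|z_j\|\to\infty$ and $t_j\to\infty$. By definition this gives $u\in C_\infty(A)$.

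Since $L$ is closed and $u\ne0$, we obtain $u\in L\cap C_\infty(A)\setminus\{0\}$, contradicting the hypothesis $L\cap C_\infty(A)\subset\{0\}$. Hence the constant $M$ exists. If $A$ is bounded, the estimate holds trivially with $M$ equal to a bound for $\|y\|$ on $A$, using that the projection onto $L$ along $V$ is a bounded linear map; the contradiction argument in fact already covers this case, since it produces unbounded points.

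The only mildly delicate step is the choice of normalization $t_j=\|y_j\|$ rather than $\|z_j\|$. This choice makes the $V$-component vanish in the limit, and it ensures the limit is a unit vector inside $L$. The direct-sum decomposition matters only through the continuity of the projections onto $V$ and $L$, which holds automatically in finite dimensions. No earlier results beyond the definition are needed.
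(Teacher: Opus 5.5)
Your proposal is correct and follows essentially the same argument as the paper: contradiction, rescaling by $t_j=\|y_j\|$, extracting a unit limit $u\in L$, and concluding $u\in L\cap C_\infty(A)\setminus\{0\}$. Your explicit check that $\|z_j\|\to\infty$ is a small detail the paper leaves implicit.
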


\begin{proof}
Suppose no such $M$ exists. Then, there is a sequence
$
z_j=x_j+y_j\in A
$
such that
$\frac{\|y_j\|}{1+\|x_j\|}\longrightarrow+\infty$.
In particular, $\|y_j\|\to\infty$ and
$
\frac{\|x_j\|}{\|y_j\|}\longrightarrow 0$.
Passing to a subsequence,
$$
\frac{y_j}{\|y_j\|}\longrightarrow u\in L,
\qquad \|u\|=1.
$$
Set $
t_j:=\|y_j\|$.
Then, $t_j\to\infty$ and
$$
\frac{z_j}{t_j}
=
\frac{x_j}{\|y_j\|}
+
\frac{y_j}{\|y_j\|}
\longrightarrow u.
$$
Thus
$u\in L\cap C_\infty(A)$,
contradicting the hypothesis.
\end{proof}

We recall Rudin's geometric criterion.

\begin{definition}\label{def:algebraic-region}
An \emph{algebraic region of type $(d,n)$} is a set of the form
$$
\Omega
=
\left\{
 x+y:
 \begin{array}{l}
 x\in V,\ y\in L,\ \C^n=V\oplus L,\\
 \dim_{\C}V=d,\ \dim_{\C}L=n-d,\\
 \|y\|\le A(1+\|x\|)^B
 \end{array}
\right\},
$$
for some complex linear subspaces $V, L$ and some constants $A,B>0$.
\end{definition}
Rudin's theorem states that 
\begin{theorem}[{Rudin \cite{Rudin67}, \cite[Theorem 3, p 78]{Chirka89}}]\label{thm_Rudin} A closed pure $d$-dimensional complex analytic subset of $\C^n$ is algebraic if and only if it is contained in an algebraic region of type $(d,n)$
\end{theorem}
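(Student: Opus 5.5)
The statement has two directions. The forward direction (algebraic $\Rightarrow$ contained in a region) follows from results already in the paper. The converse is the real content; I would reduce it to Liouville's theorem via canonical defining functions.

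\emph{Algebraic $\Rightarrow$ contained in a region.} If $X$ is algebraic and $d>0$, Remark \ref{rem:algebraic-cone} says $\Sigma_\infty(X)$ is a projective algebraic set of dimension $d-1$. A generic projective $(n-d-1)$-plane $\Pj(L)$ misses it, since $(d-1)+(n-d-1)<n-1$. By Lemma \ref{lem:projective-affine-bridge}, $L\cap C_\infty(X)=\{0\}$. Proposition \ref{prop:linear-growth} then gives $\|y\|\le M(1+\|x\|)$ on $X$ for any complement $V$, so $X$ lies in an algebraic region with $B=1$. The case $d=0$ is trivial: $X$ is then a finite set, hence bounded. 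Alternatively one can avoid the Le--Pham remark and use Noether normalization directly.

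\emph{Contained in a region $\Rightarrow$ algebraic.} Suppose $X\subset\Omega$ with $\|y\|\le A(1+\|x\|)^B$, and let $p:X\to V\cong\C^d$ be the projection along $L$.

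First, I would show that $p$ is proper. The preimage of a compact set $K\subset V$ is closed in $X$ and lies in $K\times\{\|y\|\le A(1+\max_K\|x\|)^B\}$, so it is compact. The fibers are compact analytic subsets of $L\cong\C^{n-d}$, hence finite.

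By Remmert's proper mapping theorem and the theory of analytic covers, $p$ is then a finite branched covering of some degree $k$ over $V$. It is unbranched off a proper analytic subset $\Delta\subset V$.

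Next, fix a linear form $\lambda$ on $L$. For $x\notin\Delta$ with fiber $y_1(x),\dots,y_k(x)$, set
$$
P_\lambda(x,w)=\prod_{i=1}^k\bigl(w-\lambda(y_i(x))\bigr)=w^k+\sum_{j=1}^k a_j(x)\,w^{k-j}.
$$
The coefficients $a_j$ are holomorphic on $V\setminus\Delta$ and locally bounded near $\Delta$ by properness. Hence they extend holomorphically to all of $V$ by the Riemann extension theorem. The region bound gives
$$
|a_j(x)|\le \binom{k}{j}\bigl(\|\lambda\|A(1+\|x\|)^B\bigr)^j.
$$
Cauchy estimates (generalized Liouville) then force each $a_j$ to be a polynomial of degree at most $jB$. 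Therefore $Q_\lambda(x,y):=P_\lambda(x,\lambda(y))$ is a polynomial on $\C^n$ vanishing on $X$.

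Finally, I would show that $X=\bigcap_\lambda\{Q_\lambda=0\}$. Let $(x_0,y_0)\notin X$. The fiber over $x_0$ is the finite set $\{y_i\}$, with multiplicity at branch points, and all $y_i\neq y_0$. Choose $\lambda$ with $\lambda(y_0)\ne\lambda(y_i)$ for all $i$. Then $Q_\lambda(x_0,y_0)\ne0$. This uses that $P_\lambda(x_0,\cdot)$ has as roots exactly $\lambda$ of the fiber points, which holds by continuity at branch points. By Noetherianity, finitely many $Q_\lambda$ suffice, so $X$ is algebraic.

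The main obstacle is the analytic-cover step: that properness of $p$ yields a finite branched covering with constant sheet number $k$. This is where pure dimensionality and Remmert's theorem enter, and it is also what makes the Riemann extension of the symmetric functions legitimate. Once that structure is in place, the growth-to-polynomial passage is routine.
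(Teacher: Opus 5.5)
The paper does not prove this statement. It quotes it from Rudin and from Chirka, and only the implication ``contained in an algebraic region $\Rightarrow$ algebraic'' is used, in Theorem \ref{thm_intrinsic}.

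Your proof of that implication is correct and is essentially the classical argument behind the cited result:
\begin{enumerate}
\item the projection along $L$ is proper;
\item $X$ is therefore an analytic cover over $V$;
\item the symmetric functions of the values $\lambda(y_i(x))$ extend across $\Delta$ by Riemann's theorem;
\item Liouville under polynomial growth makes them polynomials;
\item a point off $X$ is separated from $X$ by a suitable $\lambda$.
\end{enumerate}

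Two points you only gesture at deserve a line each.
\begin{itemize}
\item Vanishing of $Q_\lambda$ at points of $X$ lying over $\Delta$ needs $X\setminus p^{-1}(\Delta)$ to be dense in $X$. This holds because $X\cap p^{-1}(\Delta)$ has dimension $<d$ and $X$ is pure $d$-dimensional. This is exactly where purity is really used: an infinite discrete set added inside the region would not be cut out by the $Q_\lambda$.
\item The separation step uses the opposite inclusion: every root of $P_\lambda(x_0,\cdot)$ is $\lambda$ of a point of $p^{-1}(x_0)$. This follows from continuity of roots, the uniform bound on the $y_i(x)$ near $x_0$, and closedness of $X$.
\end{itemize}

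For the forward implication, which the paper never needs, you take a different route from a direct argument through Noether normalization or the projective closure. You reuse the paper's own tools: Remark \ref{rem:algebraic-cone}, a dimension count, Lemma \ref{lem:projective-affine-bridge}, and Proposition \ref{prop:linear-growth}. This is valid and not circular, and it even yields exponent $B=1$. In substance it is the projective-closure argument rewritten in terms of $C_\infty(X)$.

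The only external input is the Le--Pham theorem, and you need just its easy half. If $f\in I(X)$ has top-degree form $f_m$, and $x_j\in X$ with $x_j/t_j\to v$, then $f_m(v)=\lim_j f(x_j)/t_j^m=0$. Hence $\Sigma_\infty(X)$ lies in the projective set cut out by these top-degree forms, namely $\overline{X}\cap H_\infty$. That set has dimension at most $d-1$, because the hyperplane at infinity $H_\infty$ contains no component of the projective closure $\overline{X}\subset\Pj^n(\C)$.
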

.

\begin{theorem}[Avoiding-plane criterion]\label{thm_intrinsic}
Let $X\subset\C^n$ be a closed pure $d$-dimensional complex analytic set. Assume there exists a complex linear subspace
$$
L\subset\C^n,
\qquad
\dim_{\C}L=n-d,
$$
such that
$$
L\cap C_\infty(X)\subset\{0\}.
$$
Then, $X$ is algebraic. More precisely, $X$ is contained in an algebraic region of type $(d,n)$ with exponent $B=1$.
\end{theorem}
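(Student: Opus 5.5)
The plan is to combine Proposition \ref{prop:linear-growth} with Rudin's criterion (Theorem \ref{thm_Rudin}). First we fix a complex linear complement $V$ of $L$, so that $\C^n=V\oplus L$. Since $\dim_{\C}L=n-d$, we get $\dim_{\C}V=d$; such a $V$ exists, for instance the Hermitian orthogonal complement $L^{\perp}$.

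Next we apply Proposition \ref{prop:linear-growth} with $A=X$. Its hypothesis $L\cap C_\infty(X)\subset\{0\}$ is exactly what is assumed, so it yields a constant $M>0$ such that every $z=x+y\in X$ with $x\in V$, $y\in L$ satisfies $\|y\|\le M(1+\|x\|)$. It follows that $X$ lies in the set
\[
\Omega=\{x+y:\ x\in V,\ y\in L,\ \|y\|\le M(1+\|x\|)\},
\]
which is an algebraic region of type $(d,n)$ in the sense of Definition \ref{def:algebraic-region}, with constants $A=M$ and $B=1$.

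The only degenerate case is when $X$ is bounded. Then $C_\infty(X)$ is empty, which is why the hypothesis reads $\subset\{0\}$ rather than $=\{0\}$. Proposition \ref{prop:linear-growth} still applies verbatim in this case: the contradiction argument in its proof never produces an unbounded sequence, so the bound holds trivially. Alternatively, $X$ bounded already gives $\|y\|\le M$ for some $M$.

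Finally, since $X$ is closed, pure $d$-dimensional, and contained in an algebraic region of type $(d,n)$, Theorem \ref{thm_Rudin} shows that $X$ is algebraic. The statement about the exponent $B=1$ is just the explicit form of $\Omega$ above. There is no real obstacle here: the analytic content is carried entirely by Rudin's theorem, and the remaining work is checking that the linear bound matches Rudin's definition, including the dimension count $\dim_{\C}V=d$, $\dim_{\C}L=n-d$.
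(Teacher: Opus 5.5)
Your proposal is correct and follows essentially the same route as the paper: choose a complement $V$ of $L$, apply Proposition \ref{prop:linear-growth} to place $X$ in an algebraic region of type $(d,n)$ with $B=1$, then conclude with Rudin's criterion (Theorem \ref{thm_Rudin}). Your side remark on the bounded case is also accurate, since $C_\infty(X)=\varnothing$ when $X$ is bounded; the paper does not treat that case separately.
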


\begin{proof}
Choose a complex linear complement $V$ of $L$. Proposition \ref{prop:linear-growth} gives $M>0$ such that
$$
X\subset
\{x+y:x\in V,\ y\in L,\ \|y\|\le M(1+\|x\|)\}.
$$
This is an algebraic region of type $(d,n)$ with exponent $1$. Rudin's criterion (Theorem \ref{thm_Rudin}) implies that $X$ is algebraic.
\end{proof}

\begin{theorem}[Metric Chow theorem]\label{thm_metric-chow}
Let $X\subset\C^n$ be a closed pure $d$-dimensional complex analytic set. If
$$
\HH^{2d}\bigl( \Sigma_\infty(X)\bigr)=0,
$$
then $X$ is algebraic.

If $1\le d<n$, then 
$$
\HH^{2d(n-d)}
\bigl(
\{L\in\Gr(n-d,n):L\cap C_\infty(X)\ne\{0\}\}
\bigr)=0.
$$
\end{theorem}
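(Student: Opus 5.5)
Most of the work has already been done by Proposition \ref{prop:incidence}, Corollary \ref{cor:avoiding-plane} and the avoiding-plane criterion (Theorem \ref{thm_intrinsic}); the proof mainly assembles them. First I will dispose of the degenerate cases before assuming $1\le d<n$.

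\emph{Case $d=0$.} Here $X$ is a discrete closed set. The hypothesis reads $\HH^0(\Sigma_\infty(X))=0$, which means $\Sigma_\infty(X)=\varnothing$. By Lemma \ref{lem_spherical-directions}, an unbounded $X$ has nonempty $D_\infty(X)$ and hence nonempty $\Sigma_\infty(X)$. So $X$ is bounded, and a discrete closed bounded set is finite, hence algebraic.

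\emph{Case $d=n$.} Then $X=\C^n$ (a closed pure $n$-dimensional analytic set is a union of components of $\C^n$), which is algebraic. I also note that the hypothesis fails in this case, since $\Sigma_\infty(\C^n)=\Pj^{n-1}(\C)$ has dimension $2n-2<2n$. Either way the statement holds.

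\emph{Case $X$ bounded with $1\le d<n$.} A compact analytic subset of $\C^n$ is finite, so it has dimension $0$, contradicting $d\ge1$. This case therefore cannot occur, and $X$ is unbounded whenever $d\ge1$.

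\emph{Main case $1\le d<n$.} Since $X$ is unbounded, Lemma \ref{lem_spherical-directions} shows $\Sigma_\infty(X)$ is a compact subset of $\Pj^{n-1}(\C)$. Apply Proposition \ref{prop:incidence} with $E=\Sigma_\infty(X)$. Its hypothesis $\HH^{2d}(E)=0$ gives
$$\HH^{2d(n-d)}(\mathcal B(\Sigma_\infty(X)))=0.$$
By Lemma \ref{lem:projective-affine-bridge}, $\mathcal B(\Sigma_\infty(X))$ is exactly the set $\{L\in\Gr(n-d,n):L\cap C_\infty(X)\ne\{0\}\}$, which proves the second assertion.

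\emph{Algebraicity in the main case.} The Grassmannian $G=\Gr(n-d,n)$ is a compact manifold of real dimension $2d(n-d)$, so it has positive $\HH^{2d(n-d)}$-measure. A null subset therefore cannot be all of $G$, and there exists $L\in G$ with $L\cap C_\infty(X)=\{0\}$. Theorem \ref{thm_intrinsic} then shows that $X$ is algebraic.

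The only point needing care is that $G$ has positive top-dimensional Hausdorff measure. This holds because a smooth compact $k$-manifold with a Riemannian metric has $\HH^k$ equal, up to a normalizing constant, to its Riemannian volume, which is positive. Everything else is direct citation of earlier results.
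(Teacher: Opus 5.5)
Your proof is correct and follows the paper's argument essentially verbatim. You handle $d=0$ and $d=n$ the same way, and in the main case you apply Proposition~\ref{prop:incidence} to $E=\Sigma_\infty(X)$ and then Theorem~\ref{thm_intrinsic}, while making explicit two points the paper leaves implicit: $\Gr(n-d,n)$ has positive $\HH^{2d(n-d)}$-measure, and a compact analytic subset of $\C^n$ is finite. One harmless slip: your aside in the case $d=n$ is backwards, since $\dim_{\R}\Pj^{n-1}(\C)=2n-2<2n$ gives $\HH^{2n}(\Sigma_\infty(\C^n))=0$, so the hypothesis actually holds there; this does not affect the argument.
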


\begin{proof}
Assume first $1\le d<n$. Then $X$ is unbounded. By Corollary \ref{cor:avoiding-plane}, almost every $L\in\Gr(n-d,n)$ satisfies $
L\cap C_\infty(X)=\{0\}
$. Theorem \ref{thm_intrinsic} then implies that $X$ is algebraic. The second conclusion is  Proposition \ref{prop:incidence} applied to $E= \Sigma_\infty(X)$.

If $d=n$, $X = \C^n$, hence algebraic.

Finally, let $d=0$. Since $\HH^0( \Sigma_\infty(X))=0$, the set $ \Sigma_\infty(X)$ is empty. If $X$ were unbounded, Lemma \ref{lem_spherical-directions} would give a nonempty asymptotic direction set, a contradiction. Thus $X$ is bounded. Since $X$ is closed, it is compact. A zero-dimensional analytic subset of $\C^n$ is discrete, hence the
compact set $X$ is finite.
\end{proof}

\section{Applications}\label{sec3}

\subsection{Geometric consequences}

\begin{theorem}[Metric characterization]\label{thm_characterization}
Let $X\subset\C^n$ be a closed pure $d$-dimensional complex analytic set. The following are equivalent:
\begin{enumerate}[label=\textup{(\roman*)}]
\item $X$ is algebraic;
\item $\HH^{2d}\bigl( \Sigma_\infty(X)\bigr)=0$;
\item $\dim_H \Sigma_\infty(X)<2d$.
\end{enumerate}
Moreover, if $d>0$ and $X$ is algebraic, then
$$
\dim_H \Sigma_\infty(X)=2d-2.
$$
\end{theorem}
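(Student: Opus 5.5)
The plan is to prove the cycle of implications (ii)$\Rightarrow$(i)$\Rightarrow$(iii)$\Rightarrow$(ii), and to obtain the dimension formula along the way in the step (i)$\Rightarrow$(iii).

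The implication (ii)$\Rightarrow$(i) is exactly Theorem \ref{thm_metric-chow}, so nothing new is needed there. The implication (iii)$\Rightarrow$(ii) follows from the definition of Hausdorff dimension: if $\dim_H\Sigma_\infty(X)<2d$, then $\HH^{s}(\Sigma_\infty(X))=0$ for every $s>\dim_H\Sigma_\infty(X)$, and in particular for $s=2d$.

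The substantive step is (i)$\Rightarrow$(iii), together with the formula $\dim_H\Sigma_\infty(X)=2d-2$ for $d>0$. We split into cases.

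\emph{Case $d=0$.} An algebraic zero-dimensional $X$ is a finite set, so it is bounded. No sequence $x_j\in X$ has $\|x_j\|\to\infty$, hence $C_\infty(X)\subset\{0\}$. Therefore $\Sigma_\infty(X)=\varnothing$, which has Hausdorff dimension $<0$ by the usual convention (or at least $\HH^0$-measure zero). This gives (iii), with the usual interpretation of the strict inequality at $d=0$; if one prefers to avoid the convention, (ii) holds directly in this case.

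\emph{Case $d>0$.} By Remark \ref{rem:algebraic-cone}, $\Sigma_\infty(X)$ is a complex projective algebraic subset of $\Pj^{n-1}(\C)$ of pure complex dimension $d-1$; at least its dimension is $d-1$, which is all we need. The key fact to establish is that a compact complex projective variety $Y$ of complex dimension $k$ has Hausdorff dimension exactly $2k$.

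For the upper bound, stratify $Y$ into finitely many locally closed complex submanifolds of real dimensions at most $2k$. Each stratum is a countable union of compact pieces of $C^1$ submanifolds, so its $\HH^{s}$-measure vanishes for $s>2k$. Alternatively, use a finite cover by charts in which $Y$ is a finite branched cover over a polydisc in $\C^k$ via a proper projection; the local parametrization theorem together with Lipschitz estimates then gives the bound.

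For the lower bound, $Y$ contains a nonempty open subset of its regular locus, which is a $2k$-dimensional real submanifold. Such a subset has positive $\HH^{2k}$-measure.

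Applying this with $k=d-1$ gives $\dim_H\Sigma_\infty(X)=2d-2<2d$.

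The main obstacle is the upper bound $\dim_H Y\le 2k$. The cleanest route is to quote that the singular set of $Y$ is an analytic set of smaller dimension and then induct on dimension: $Y=\mathrm{Reg}(Y)\cup\mathrm{Sing}(Y)$. Here $\mathrm{Reg}(Y)$ is a countable union of compact subsets of $2k$-dimensional $C^1$ manifolds, which are Lipschitz images of compact subsets of $\R^{2k}$, so $\HH^{2k+\varepsilon}(\mathrm{Reg}(Y))=0$ for every $\varepsilon>0$. Countable stability of Hausdorff dimension then finishes the argument.
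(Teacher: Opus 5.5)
Your proposal is correct and follows the paper's proof: it uses the same cycle (i)$\Rightarrow$(iii)$\Rightarrow$(ii)$\Rightarrow$(i), with Remark~\ref{rem:algebraic-cone} handling the algebraic case and Theorem~\ref{thm_metric-chow} giving (ii)$\Rightarrow$(i). You go slightly beyond the paper in two places: you justify the standard fact that a complex projective algebraic set of dimension $d-1$ has Hausdorff dimension exactly $2d-2$, which the paper simply asserts, and you point out that (iii) at $d=0$ only makes sense under a convention such as $\dim_H\varnothing<0$, which the paper passes over.
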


\begin{proof}
If $X$ is algebraic and $d>0$, Remark \ref{rem:algebraic-cone} gives that $ \Sigma_\infty(X)$ is a complex projective algebraic set of complex dimension $d-1$. Hence, $
\dim_H \Sigma_\infty(X)=2d-2<2d$. Then,  (i) implies (iii), and (iii) implies (ii). The implication (ii) implies (i) is Theorem \ref{thm_metric-chow}.

If $d=0$, an algebraic $X$ is finite and therefore has no asymptotic directions. Theorem \ref{thm_metric-chow} gives the reverse implication. Thus the equivalence also holds in dimension zero.
\end{proof}

\begin{corollary}\label{cor:dimension-gap}
Let $X\subset\C^n$ be a closed pure $d$-dimensional complex analytic set with $d>0$. Then exactly one of the following alternatives occurs:
\begin{enumerate}[label=\textup{(\roman*)}]
\item $X$ is algebraic and
$$
\dim_H \Sigma_\infty(X)=2d-2;
$$
\item $X$ is nonalgebraic and
$$
\HH^{2d}\bigl( \Sigma_\infty(X)\bigr)>0,
\qquad
\dim_H \Sigma_\infty(X)\ge2d.
$$
\end{enumerate}
In particular, the value $2d-1$ cannot occur as $\dim_H \Sigma_\infty(X)$.
\end{corollary}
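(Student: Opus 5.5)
The corollary follows from Theorem \ref{thm_characterization}, by splitting according to whether $X$ is algebraic. Since $X$ either is or is not algebraic, at most one alternative can hold. It therefore suffices to show that each case forces the stated metric conclusion.

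\emph{Case 1: $X$ algebraic.} Since $d>0$, the final assertion of Theorem \ref{thm_characterization} gives $\dim_H\Sigma_\infty(X)=2d-2$. That assertion rests on Remark \ref{rem:algebraic-cone}: $\Sigma_\infty(X)$ is a complex projective algebraic set of pure complex dimension $d-1$. A compact real-analytic set of real dimension $2d-2$ has Hausdorff dimension exactly $2d-2$. Nonemptiness of $\Sigma_\infty(X)$ is guaranteed by Lemma \ref{lem_spherical-directions}, since $X$ is unbounded for $d>0$; otherwise the dimension of the empty set would cause trouble. This gives alternative (i).

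\emph{Case 2: $X$ nonalgebraic.} By the contrapositive of (ii)$\Rightarrow$(i) in Theorem \ref{thm_characterization}, we get $\HH^{2d}(\Sigma_\infty(X))>0$. By the contrapositive of (iii)$\Rightarrow$(i), we get $\dim_H\Sigma_\infty(X)\ge 2d$. Alternatively, positive $\HH^{2d}$-measure directly forces Hausdorff dimension at least $2d$, since $\HH^s(E)>0$ implies $\dim_H E\ge s$. This gives alternative (ii).

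\emph{The excluded value.} In case (i) the Hausdorff dimension equals $2d-2\neq 2d-1$. In case (ii) it is at least $2d>2d-1$. So the value $2d-1$ never occurs.

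The only step requiring any care is the exact value $2d-2$ in the algebraic case. For this I simply invoke Theorem \ref{thm_characterization}, whose proof handles it via Remark \ref{rem:algebraic-cone}. No new obstacle arises.
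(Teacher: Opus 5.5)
Your proposal is correct and matches the paper's proof. In both, alternative (i) comes from the final assertion of Theorem \ref{thm_characterization}, and in the nonalgebraic case the contrapositive of (ii)$\Rightarrow$(i) gives $\HH^{2d}(\Sigma_\infty(X))>0$, hence $\dim_H\Sigma_\infty(X)\ge 2d$; your side remarks (nonemptiness via unboundedness, and the Hausdorff dimension of a compact complex projective set of dimension $d-1$) only spell out what the paper leaves implicit.
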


\begin{proof} (i) follows from Theorem \ref{thm_characterization}. Assume now that $X$ is nonalgebraic. Theorem \ref{thm_characterization} implies $
\HH^{2d}( \Sigma_\infty(X))>0$. Thus, $\dim_H \Sigma_\infty(X)\ge2d$. This proves (ii). 
\end{proof}

\begin{corollary}\label{cor_intersection-all-direction1}
Let $0<d<n$ and let $X\subset\C^n$ be a closed pure $d$-dimensional complex analytic set. If $X$ is nonalgebraic, then
$$
\Pj(L)\cap \Sigma_\infty(X)\ne\varnothing
$$
for every $L\in\Gr(n-d,n)$.
\end{corollary}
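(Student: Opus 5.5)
The plan is to argue by contraposition, using Theorem \ref{thm_intrinsic} together with Lemma \ref{lem:projective-affine-bridge}. Assume $0<d<n$ and that there exists some $L\in\Gr(n-d,n)$ with
$$
\Pj(L)\cap \Sigma_\infty(X)=\varnothing .
$$
We want to show that $X$ is then algebraic, which contradicts the hypothesis.

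First, apply Lemma \ref{lem:projective-affine-bridge} with $A=X$ and this (nonzero, since $n-d\ge1$) subspace $L$. The equivalence there, read in its negated form, says that $\Pj(L)\cap\Sigma_\infty(X)=\varnothing$ holds exactly when $L\cap C_\infty(X)=\{0\}$. Here we use that $0\in C_\infty(X)$, which holds because $X$ is unbounded when $d>0$ (Lemma \ref{lem_spherical-directions}). In any case the needed inclusion $L\cap C_\infty(X)\subset\{0\}$ holds regardless of this point.

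Second, $\dim_{\C}L=n-d$ and $L\cap C_\infty(X)\subset\{0\}$ are precisely the hypotheses of Theorem \ref{thm_intrinsic}. That theorem (via Proposition \ref{prop:linear-growth} and Rudin's criterion) yields that $X$ lies in an algebraic region of type $(d,n)$ with exponent $1$, hence $X$ is algebraic. This contradiction proves that $\Pj(L)$ meets $\Sigma_\infty(X)$ for every $L$.

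There is no real obstacle: the argument is purely formal once Theorem \ref{thm_intrinsic} is available. The only point to check is the bookkeeping in Lemma \ref{lem:projective-affine-bridge}. A line $\ell\subset L$ with $\ell\cap C_\infty(X)\neq\{0\}$ gives a nonzero vector of $L\cap C_\infty(X)$. Conversely, a nonzero $v\in L\cap C_\infty(X)$ gives $\ell=\C v\in\Pj(L)\cap\Sigma_\infty(X)$. This is immediate from the definition $\Sigma_\infty=\phi(C_\infty\setminus\{0\})$.
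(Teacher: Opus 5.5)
Your proof is correct and is the same as the paper's: by contraposition, Lemma \ref{lem:projective-affine-bridge} turns $\Pj(L)\cap\Sigma_\infty(X)=\varnothing$ into $L\cap C_\infty(X)\subset\{0\}$, and Theorem \ref{thm_intrinsic} then makes $X$ algebraic. You also rightly point out that only the inclusion $\subset\{0\}$ is needed; this sidesteps the lemma's literal "$\ne\{0\}$" formulation, which fails for bounded sets where $C_\infty=\varnothing$.
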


\begin{proof}
If some $L$ satisfied $\Pj(L)\cap \Sigma_\infty(X)=\varnothing$, then Lemma \ref{lem:projective-affine-bridge} would give $L\cap C_\infty(X)=\{0\}$.
Theorem \ref{thm_intrinsic}  then imply that $X$ is algebraic.
\end{proof}

Corollary \ref{cor_intersection-all-direction1} applied to $d = n-1$ gives: 

\begin{corollary}\label{cor_intersection-all-direction2}
Let $X\subset\C^n$ be a closed pure $(n-1)$-dimensional complex analytic set. Then either $X$ is algebraic or
$$
 \Sigma_\infty(X)=\Pj^{n-1}(\C).
$$
Equivalently,
$$
 \Sigma_\infty(X)\ne\Pj^{n-1}(\C)
\quad\Longrightarrow\quad
X\text{ is algebraic}.
$$
\end{corollary}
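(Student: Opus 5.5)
We apply Corollary \ref{cor_intersection-all-direction1} with $d=n-1$. Two cases need separate treatment: $n\ge 2$, where $0<d<n$, and the degenerate case $n=1$, $d=0$.

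Assume first $n\ge2$, so that $0<n-1<n$. Suppose $X$ is nonalgebraic. The planes in $\Gr(n-d,n)=\Gr(1,n)$ are exactly the complex lines $\ell\in\Pj^{n-1}(\C)$. For such an $\ell$ we have $\Pj(\ell)=\{\ell\}$, so the condition $\Pj(\ell)\cap\Sigma_\infty(X)\ne\varnothing$ just says $\ell\in\Sigma_\infty(X)$. Corollary \ref{cor_intersection-all-direction1} gives this for every $\ell$, hence $\Sigma_\infty(X)=\Pj^{n-1}(\C)$.

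If $n=1$, then $d=0$ and $\Pj^{0}(\C)$ is a single point. If $X$ is nonalgebraic, it is not finite. A zero-dimensional closed analytic set is discrete, so a compact one is finite; therefore $X$ is unbounded. Lemma \ref{lem_spherical-directions} then gives $\Sigma_\infty(X)\ne\varnothing$, which means $\Sigma_\infty(X)=\Pj^0(\C)$. The same conclusion also follows from Theorem \ref{thm_metric-chow}, since $\HH^0(\Sigma_\infty(X))>0$ forces $\Sigma_\infty(X)$ to be nonempty.

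The dichotomy is not exclusive: for instance $X=\C^{n-1}\times\{0\}$ is algebraic, yet $\Sigma_\infty(X)=\Pj(\C^{n-1}\times\{0\})$. The ``equivalently'' form is simply the contrapositive of the dichotomy read as the implication ``$X$ nonalgebraic $\Rightarrow$ $\Sigma_\infty(X)=\Pj^{n-1}(\C)$''. There is no real obstacle here; the only care needed is the identification $\Gr(1,n)=\Pj^{n-1}(\C)$ and the separate treatment of the edge case $n=1$.
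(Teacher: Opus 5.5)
Your proof is correct and is essentially the paper's: apply Corollary \ref{cor_intersection-all-direction1} with $d=n-1$ and identify $\Gr(1,n)$ with $\Pj^{n-1}(\C)$. Your separate treatment of $n=1$, where the hypothesis $0<d<n$ fails, fills a case that the paper's one-line derivation passes over. One correction to your closing aside, which does not affect the proof: the two alternatives are in fact mutually exclusive. For $n\ge2$, an algebraic hypersurface has $\Sigma_\infty(X)$ of complex dimension $n-2$ by Remark \ref{rem:algebraic-cone}, and for $n=1$ an algebraic $X$ is finite, so $\Sigma_\infty(X)=\varnothing$. Your example $\C^{n-1}\times\{0\}$ illustrates exactly this exclusivity rather than its failure.
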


Now we consider an entire complex analytic map $
F=(f_1,\dots,f_m):\C^d\to\C^m$.
Let $$
\Gamma_F
:=\{(z,F(z)):z\in\C^d\}\subset\C^{d+m}
$$
be its graph. This is a smooth closed complex analytic set of dimension $d$.

\begin{corollary}\label{cor_entire-map}
Let $F:\C^d\to\C^m$ be entire. The following are equivalent:
\begin{enumerate}[label=\textup{(\roman*)}]
\item $F$ is polynomial;
\item $\HH^{2d}\bigl( \Sigma_\infty(\Gamma_F)\bigr)=0$;
\item $\dim_H \Sigma_\infty(\Gamma_F)<2d$.
\end{enumerate}
If $F$ is nonpolynomial, then $ \Sigma_\infty(\Gamma_F)$ meets every projective $(m-1)$-plane in $\Pj^{d+m-1}(\C)$ and
$$
\HH^{2d}\bigl( \Sigma_\infty(\Gamma_F)\bigr)>0.
$$
In particular, for an entire function $f:\C^d\to\C$,
$$
f\text{ nonpolynomial}
\quad\Longrightarrow\quad
 \Sigma_\infty(\Gamma_f)=\Pj^d(\C).
$$
\end{corollary}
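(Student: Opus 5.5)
The plan is to apply Theorem \ref{thm_characterization} to $X=\Gamma_F\subset\C^{d+m}$, a closed pure $d$-dimensional analytic set, with $n=d+m$. That theorem makes (ii) and (iii) both equivalent to algebraicity of $\Gamma_F$. So the only genuinely new step is to check that $\Gamma_F$ is algebraic if and only if $F$ is polynomial.

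If $F$ is polynomial, then $\Gamma_F=\{w_k-f_k(z)=0,\ k=1,\dots,m\}$ is algebraic. Conversely, suppose $\Gamma_F$ is algebraic. I would avoid elimination-theory arguments and use Rudin's criterion in its easy direction: by Theorem \ref{thm_Rudin}, $\Gamma_F$ lies in some algebraic region of type $(d,d+m)$. That alone does not immediately give growth of $F$, because the region's splitting $V\oplus L$ need not be the coordinate splitting.

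The cleaner route is through asymptotic directions. If $\Gamma_F$ is algebraic, Remark \ref{rem:algebraic-cone} gives $\dim_\C C_\infty(\Gamma_F)=d$. Then for a generic $L\in\Gr(m,d+m)$ we have $L\cap C_\infty(\Gamma_F)=\{0\}$, but genericity is not enough: I need to know that the specific plane $L_0=\{0\}\times\C^m$ works.

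Instead, I would argue directly, with $f_k(z)=w_k$ on $\Gamma_F$. Each coordinate $w_k$ restricted to the algebraic set $\Gamma_F$ is a regular function on an affine variety. The projection $p:\Gamma_F\to\C^d$, $(z,w)\mapsto z$, is biregular onto $\C^d$ because it is bijective with inverse $z\mapsto(z,F(z))$. Since $\C^d$ is normal, a bijective morphism onto it from an irreducible variety is an isomorphism by Zariski's main theorem; $\Gamma_F$ is irreducible since it is homeomorphic to $\C^d$. So $F=w\circ p^{-1}$ is a regular function on $\C^d$, i.e.\ polynomial. An alternative elementary route: since $\Gamma_F$ is algebraic, each $w_k$ satisfies a nonzero polynomial equation $P_k(z,w_k)=0$ on $\Gamma_F$, by algebraic dependence of $d+1$ functions on a $d$-dimensional variety. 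Then $f_k$ is an entire function algebraic over $\C[z]$. Cauchy-type root bounds give polynomial growth, where the degenerate locus of the leading coefficient is handled by the maximum principle on complex lines. Hence $f_k$ is polynomial by Liouville. I expect this step, ruling out transcendental entire $F$ with algebraic graph, to be the only real obstacle, and I would present the Zariski's main theorem argument as the primary one.

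For the remaining claims, suppose $F$ is nonpolynomial. Then $\Gamma_F$ is nonalgebraic, so Corollary \ref{cor:dimension-gap} gives $\HH^{2d}(\Sigma_\infty(\Gamma_F))>0$. Corollary \ref{cor_intersection-all-direction1}, with $n-d=m$, gives that $\Sigma_\infty(\Gamma_F)$ meets $\Pj(L)$ for every $L\in\Gr(m,d+m)$, i.e.\ every projective $(m-1)$-plane. When $m=1$ these are points of $\Pj^d(\C)$, so $\Sigma_\infty(\Gamma_f)=\Pj^d(\C)$; this is also exactly Corollary \ref{cor_intersection-all-direction2} with $n=d+1$.
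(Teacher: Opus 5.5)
Your proof is correct. Its skeleton is the same as the paper's: you reduce (i)$\Leftrightarrow$(ii)$\Leftrightarrow$(iii) to Theorem \ref{thm_characterization}, and you read the remaining claims off Corollaries \ref{cor:dimension-gap}, \ref{cor_intersection-all-direction1} and \ref{cor_intersection-all-direction2}. You handle the one genuine step, that an algebraic graph forces $F$ to be polynomial, differently.

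The paper uses only that the projection $\pi:\Gamma_F\to\C^d$ is generically one-to-one, hence birational. So each $f_j$ agrees with a rational function on a dense open set. Since $f_j$ is entire, that rational function has no poles and is therefore a polynomial. You instead invoke Zariski's main theorem: in characteristic zero, a bijective morphism from an irreducible variety onto the normal variety $\C^d$ is an isomorphism, so $z\mapsto(z,F(z))$ is itself a morphism.

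Both routes are valid. The paper uses the holomorphy of $F$ to replace ZMT by the elementary fact that a pole-free rational function on $\C^d$ is a polynomial. That fact is just the normality of $\C^d$ (factoriality of $\C[z]$) that ZMT needs, in concrete form. Your route is a one-line black box that needs no analytic input beyond bijectivity of $p$.

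Your fallback argument can be made cleaner. Rather than controlling the zero set of the leading coefficient $a_0$ by the maximum principle on lines, multiply $P_k(z,f_k)=0$ by $a_0^{r-1}$. Then $g:=a_0f_k$ satisfies a monic equation over $\C[z]$, so $|g(z)|\le C(1+\|z\|)^N$, and $g$ is a polynomial by Cauchy estimates. Hence $f_k=g/a_0$ is an entire rational function, hence a polynomial, which is exactly the paper's final step.
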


\begin{proof}
If $F$ is polynomial, then $\Gamma_F$ is algebraic, so Theorem \ref{thm_characterization} gives (ii) and (iii). Conversely, if (ii) or (iii) holds, Theorem \ref{thm_characterization} implies that $\Gamma_F$ is algebraic.

Since $\Gamma_F$ is irreducible and the first coordinate projection
 $
\pi:\Gamma_F\to\C^d $
is generically one-to-one, it is birational. Hence each coordinate $f_j$ represents a rational function on $\C^d$. Since $f_j$ is entire, it has no pole on $\C^d$, so it is polynomial. This proves the equivalence. The remaining statements follow from Corollaries  \ref{cor_intersection-all-direction1}, \ref{cor:dimension-gap}  and \ref{cor_intersection-all-direction2}.
\end{proof}

\subsection{Tame tangent-cones criterion}\label{sec:tame}

In this section, all definability is with respect to a fixed o-minimal structure on $\R$. We use a standard fact about o-minimal dimension: $\dim_{\R} C_\infty(X) \le  \dim_{\R} (X, \infty)$, where $(X, \infty)$ is the germ of the definable set $X$ at infinity. The proof is straightforward by arguments similar to \cite[Lemma 1.2]{Kurdyka89} (see also \cite[Corollary 2.18]{Fernandes20}). For an introduction to o-minimal structures, we refer the reader to  \cite{ Coste00, Dries98, Dries-Miller96, Loi1}.

\begin{theorem}[Definable tangent-cone criterion]\label{thm:definable-cone}
Let $X\subset\C^n$ be a closed pure $d$-dimensional complex analytic set. Suppose that $ C_\infty(X)$ is definable and $
\dim_{\R} C_\infty(X)\le2d$.
Then, $X$ is algebraic.
\end{theorem}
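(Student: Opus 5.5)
We will reduce the statement to the metric criterion of Theorem~\ref{thm_metric-chow}: it suffices to show $\HH^{2d}\bigl(\Sigma_\infty(X)\bigr)=0$. First dispose of trivial cases. If $X$ is bounded, then $X$ is compact and therefore finite, hence algebraic; so assume $X$ is unbounded. Then by Lemma~\ref{lem_spherical-directions} $D_\infty(X)$ is nonempty and compact, and $C_\infty(X)=\R_{\ge0}D_\infty(X)$. If $d=n$ there is nothing to prove, and if $d=0$ the set $X$ is discrete; in both cases the claim is immediate. Note also that in the case $d=0$ the hypothesis forces $\dim_\R C_\infty(X)\le 0$, so $C_\infty(X)=\{0\}$, which contradicts unboundedness.

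For the main step, observe that $D_\infty(X)=C_\infty(X)\cap S^{2n-1}$ is definable. Since $C_\infty(X)$ is a cone, the map
\[
(0,\infty)\times D_\infty(X)\to C_\infty(X)\setminus\{0\},\qquad (r,u)\mapsto ru,
\]
is a definable bijection. Hence $\dim_\R D_\infty(X)=\dim_\R C_\infty(X)-1\le 2d-1$. Next, $\Sigma_\infty(X)=\phi(D_\infty(X))$, and we want a definable model of this image. Identify $\Pj^{n-1}(\C)$ semialgebraically with the set of rank-one orthogonal projections $\{u u^*: u\in S^{2n-1}\}\subset\C^{n\times n}$, a compact real algebraic submanifold. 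Under this identification $\phi(u)=uu^*$ is polynomial, so $\Sigma_\infty(X)$ is a definable compact set. Since definable maps do not increase dimension, $\dim_\R \Sigma_\infty(X)\le 2d-1$. (In fact the fibers are circles, so the dimension is at most $2d-2$, but this is not needed.)

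The remaining step, which I expect to be the only real point requiring care, is passing from o-minimal dimension to Hausdorff measure. By the cell decomposition theorem, $\Sigma_\infty(X)$ is a finite union of definable $C^1$ cells, each of dimension at most $2d-1$. Each such cell is a $C^1$ embedded submanifold of dimension $k\le 2d-1$, hence a countable union of Lipschitz images of subsets of $\R^k$. Consequently its $\HH^{2d}$-measure vanishes: the $k$-dimensional measure is $\sigma$-finite and $k<2d$. Alternatively one can cite the standard fact that $\dim_H S=\dim S$ for definable sets $S$. In either form we obtain $\dim_H\Sigma_\infty(X)\le 2d-1<2d$, so $\HH^{2d}(\Sigma_\infty(X))=0$. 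Theorem~\ref{thm_metric-chow} (or, equivalently, criterion (iii) of Theorem~\ref{thm_characterization}) then shows that $X$ is algebraic.
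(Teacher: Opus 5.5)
Your proposal is correct and follows the paper's proof. You bound $\dim_{\R} D_\infty(X)\le 2d-1$, push this forward by the semialgebraic map $\phi$ to get $\dim_{\R}\Sigma_\infty(X)\le 2d-1$, convert that to $\HH^{2d}(\Sigma_\infty(X))=0$, and apply Theorem~\ref{thm_metric-chow}. Along the way you make explicit three things the paper leaves implicit: the cone bijection, a semialgebraic model of $\Pj^{n-1}(\C)$, and the step from o-minimal dimension to Hausdorff measure.

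One caveat: drop the parenthetical claim that the fibres of $\phi|_{D_\infty(X)}$ are full circles. $C_\infty(X)$ of an analytic set not yet known to be algebraic need not be invariant under unit complex scalars. For example, for the graph of $e^z$ in $\C^2$, $(-1,0)\in C_\infty$ but $(1,0)\notin C_\infty$. As you say, the claim is not used, so the proof is unaffected.
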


\begin{proof}
The cases $d=0$ and $d=n$ are immediate from the arguments in Theorem \ref{thm_metric-chow}, so we  assume $1\le d<n$. Then, $X$ is unbounded. Since $C_\infty (X)$ is definable and
$\dim_{\R} C_\infty(X) \leq  2d$, $D_\infty(X)$ is  definable and $\dim_{\R} D_\infty(X) \leq 2d-1$. 
The projection $
\phi: S^{2n-1}\to\Pj^{n-1}(\C),\; u \mapsto \mathbb{C} u$
is semialgebraic, so $ \Sigma_\infty(X) = \phi(D_\infty(X))$ is definable and
$
\dim_{\R} \Sigma_\infty(X)\le2d-1$.
Therefore
$\HH^{2d}\bigl( \Sigma_\infty(X)\bigr)=0
$;
and Theorem \ref{thm_metric-chow} implies that $X$ is algebraic.
\end{proof}

\begin{corollary}\label{cor:def-gap}
Let $X\subset\C^n$ be a closed pure $d$-dimensional complex analytic set with $d>0$ and assume $ C_\infty(X)$ is definable. Then
$$
X\text{ algebraic}
\quad\Longleftrightarrow\quad
\dim_{\R} C_\infty(X)\le2d
\quad\Longleftrightarrow\quad
\dim_{\R} C_\infty(X)=2d.
$$
Consequently, if $X$ is nonalgebraic, then
$$
\dim_{\R} C_\infty(X)\ge2d+1.
$$
\end{corollary}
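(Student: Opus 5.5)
The plan is to prove the cycle of implications
\[
X\ \text{algebraic}\ \Longrightarrow\ \dim_{\R} C_\infty(X)=2d\ \Longrightarrow\ \dim_{\R} C_\infty(X)\le 2d\ \Longrightarrow\ X\ \text{algebraic},
\]
and then read off the final claim from the contrapositive.

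The middle implication is trivial. The last implication is exactly Theorem \ref{thm:definable-cone}, since $C_\infty(X)$ is assumed definable.

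For the first implication, assume $X$ is algebraic and $d>0$. By Remark \ref{rem:algebraic-cone}, $C_\infty(X)$ is a complex algebraic cone with $\dim_{\C} C_\infty(X)=d$. A complex algebraic set of complex dimension $d$ is a real semialgebraic set of real dimension $2d$. This holds because it contains a nonempty Zariski-open set of smooth points, which form a complex $d$-manifold. Semialgebraic sets are definable in any o-minimal expansion of $(\R,+,\cdot)$, and the o-minimal dimension of such a set agrees with its semialgebraic dimension. Hence $\dim_{\R} C_\infty(X)=2d$.

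For the final statement, suppose $X$ is nonalgebraic. By the equivalence just established, $\dim_{\R} C_\infty(X)\not\le 2d$. Since o-minimal dimension takes integer values (the cone is nonempty, as it contains $0$), this gives $\dim_{\R} C_\infty(X)\ge 2d+1$.

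The only point needing care is whether "dimension" means the o-minimal dimension of the definable set $C_\infty(X)$. I take it to mean that, as in Theorem \ref{thm:definable-cone}. Then in the algebraic case one must identify this dimension with twice the complex dimension. The cleanest route is to quote that the o-minimal dimension of a semialgebraic set equals its real algebraic dimension, and that a complex algebraic variety of pure complex dimension $d$ has real dimension $2d$. The cone is pure $d$-dimensional by the standard fact that the tangent cone at infinity of a pure $d$-dimensional algebraic set is pure $d$-dimensional. Even without purity, the top-dimensional part already gives dimension exactly $2d$, which is all that is needed.
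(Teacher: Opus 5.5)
Your proof is correct and is essentially the paper's argument: the only substantive implication, $\dim_{\R} C_\infty(X)\le 2d\Rightarrow X$ algebraic, is Theorem \ref{thm:definable-cone}. The implications the paper dismisses as obvious are the ones you fill in, namely algebraic $\Rightarrow \dim_{\R} C_\infty(X)=2d$ via Remark \ref{rem:algebraic-cone}, and the final claim from the integrality of o-minimal dimension.
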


\begin{proof}
The implication from the dimension bound to algebraicity is Theorem \ref{thm:definable-cone}. The remaining implications are obvious.
\end{proof}

\begin{corollary}\label{cor:def-projective-gap}
Assume $d>0$ and $ \Sigma_\infty(X)$ is definable. Then
$$
X\text{ algebraic}
\quad\Longrightarrow\quad
\dim_{\R} \Sigma_\infty(X)=2d-2,
$$
whereas
$$
X\text{ nonalgebraic}
\quad\Longrightarrow\quad
\dim_{\R} \Sigma_\infty(X)\ge2d.
$$
\end{corollary}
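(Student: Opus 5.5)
The corollary has two implications, and I will treat them separately. Throughout, $X\subset\C^n$ is a closed pure $d$-dimensional complex analytic set with $d>0$, as in Corollary \ref{cor:def-gap}.

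\emph{First implication.} Assume $X$ is algebraic. Then Remark \ref{rem:algebraic-cone} shows that $\Sigma_\infty(X)$ is a complex projective algebraic subset of $\Pj^{n-1}(\C)$ of complex dimension $d-1$. It is therefore a real semialgebraic set. Its o-minimal (equivalently, semialgebraic) real dimension is twice its complex dimension, which is $2d-2$. I would justify this by working on the smooth locus of top complex dimension: there the set is a real $C^1$ manifold of real dimension $2d-2$, and the singular locus has strictly smaller dimension. No definability assumption is actually needed for this direction.

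\emph{Second implication.} Assume $X$ is nonalgebraic and $\Sigma_\infty(X)$ is definable. Corollary \ref{cor:dimension-gap} gives $\HH^{2d}(\Sigma_\infty(X))>0$. It then suffices to prove the standard fact that a definable set $E\subset\Pj^{n-1}(\C)$ with $\dim_{\R}E=k$ has $\HH^{s}(E)=0$ for every $s>k$. Applying this with $s=2d$, the inequality $\dim_{\R}\Sigma_\infty(X)\le 2d-1$ would force $\HH^{2d}(\Sigma_\infty(X))=0$. That contradicts Corollary \ref{cor:dimension-gap}, so $\dim_{\R}\Sigma_\infty(X)\ge 2d$.

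To prove the fact, I would do the following.
\begin{enumerate}
\item Cover $\Pj^{n-1}(\C)$ by finitely many affine charts. Each chart map is semialgebraic and bi-Lipschitz on compact pieces, so both definability and Hausdorff-null sets are preserved, and it is enough to work in $\R^{2n-2}$.
\item Apply the $C^1$ cell decomposition theorem to $E$ in a chart. This writes $E$ as a finite union of $C^1$ cells, each of dimension at most $k$.
\item Each cell of dimension $j\le k$ is a $C^1$ submanifold of dimension $j$, hence $\sigma$-finite for $\HH^j$. Consequently it has $\HH^s$-measure zero for every $s>j$.
\end{enumerate}

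This last step is the theorem's main obstacle, and it is where I expect the only real subtlety. A cell may be unbounded or have unbounded derivatives near its boundary, but this does no harm. A $C^1$ manifold is a countable union of compact pieces, each a Lipschitz image of a compact subset of $\R^j$. Each piece has finite $\HH^j$-measure and therefore zero $\HH^s$-measure for $s>j$. Countable additivity then finishes the argument.

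Alternatively, the same argument as in the proof of Theorem \ref{thm:definable-cone}, which was already used there, can be invoked directly: the step "definable of dimension $\le 2d-1$ implies $\HH^{2d}=0$" is exactly the step taken in that proof.
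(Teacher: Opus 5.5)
Your proof is correct and follows the paper's route. The algebraic case goes through Remark \ref{rem:algebraic-cone}, which is what underlies the paper's appeal to Theorem \ref{thm_characterization}. The nonalgebraic case combines Corollary \ref{cor:dimension-gap} with the fact that a definable set of real dimension $k$ is $\HH^{s}$-null for $s>k$; the paper uses this fact tacitly, and your $C^1$ cell-decomposition argument supplies its justification.
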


\begin{proof}
The algebraic case follows from Theorem \ref{thm_characterization}. If $X$ is nonalgebraic, Corollary \ref{cor:dimension-gap} gives
$\HH^{2d}( \Sigma_\infty(X))>0$.
Hence, $\dim_{\R} \Sigma_\infty(X)\ge2d$.
\end{proof}

We now recover the affine definable Chow theorem of Peterzil--Starchenko \cite[Corollary 4.5]{Peterzil09}) by looking only at the tangent cone at infinity.

\begin{corollary}[Definable Chow Theorem]\label{cor_def-chow}
Let $X\subset\C^n$ be a closed pure-dimensional complex analytic set. If $X$ is definable, then $X$ is algebraic.
\end{corollary}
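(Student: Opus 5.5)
The plan is to reduce the statement to Theorem \ref{thm:definable-cone}. That requires two facts: that $C_\infty(X)$ is definable, and that $\dim_{\R} C_\infty(X)\le 2d$. The degenerate cases come first. If $X$ is bounded, it is a compact analytic set, hence finite and algebraic. If $d=n$, then $X=\C^n$. If $d=0$, $X$ is discrete, and being definable it has finitely many connected components, so it is finite. From now on I assume $1\le d<n$ and that $X$ is unbounded.

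\emph{Definability of the cone.} I would use the description in Lemma \ref{lem_spherical-directions}: $D_\infty(X)$ is the set of $u\in S^{2n-1}$ such that for every $\varepsilon>0$ and every $R>0$ there is $x\in X$ with $\|x\|>R$ and $\bigl\|x/\|x\|-u\bigr\|<\varepsilon$. This is a first-order formula over the structure, with $X$ as a parameter, so $D_\infty(X)$ is definable. Then $C_\infty(X)=\R_{\ge0}D_\infty(X)$ is definable as the image of $\R_{\ge0}\times D_\infty(X)$ under the semialgebraic multiplication map.

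\emph{Dimension bound.} As a definable set, $X$ has real dimension $2d$, since its o-minimal dimension equals its topological dimension as a real analytic set of complex dimension $d$. The standard fact recalled at the start of \S\ref{sec:tame}, namely $\dim_{\R}C_\infty(X)\le \dim_{\R}(X,\infty)$, then gives $\dim_{\R}C_\infty(X)\le 2d$. Theorem \ref{thm:definable-cone} applies, and $X$ is algebraic.

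The step I expect to need the most care is this dimension inequality. If it has to be justified rather than quoted, I would argue as follows. Apply the semialgebraic inversion $\iota(x)=x/\|x\|^2$ to $X\setminus B(0,1)$. Then $D_\infty(X)$ is exactly the spherical tangent directions at $0$ of $\iota(X)$. This set lies in the frontier-type limit set of the definable family $\{x/\|x\|: x\in X,\ \|x\|>R\}$ as $R\to\infty$. By the o-minimal dimension bound for the limit of a definable family (curve selection plus $\dim(\text{family limit})\le\dim$ of the total space minus one), it has dimension at most $2d-1$. Hence $C_\infty(X)$, a cone over it, has dimension at most $2d$. This is the only nontrivial o-minimal input; everything else is formal.
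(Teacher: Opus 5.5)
Your proposal is correct and is essentially the paper's proof. Both reduce to Theorem~\ref{thm:definable-cone} by noting that $C_\infty(X)$ is definable and quoting $\dim_{\R}C_\infty(X)\le\dim_{\R}X=2d$; your explicit first-order description of $D_\infty(X)$ and your extra degenerate cases only fill in details the paper leaves implicit.

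One fix to your optional backup sketch: the frontier inequality should be applied to the set $\{(1/\|x\|,\,x/\|x\|):x\in X,\ \|x\|>1\}$, which has dimension at most $2d$ and whose closure meets $\{0\}\times S^{2n-1}$ exactly in $\{0\}\times D_\infty(X)$. Applied instead to the total space of the family $\{x/\|x\|:x\in X,\ \|x\|>R\}_{R>0}$, which can have dimension $2d+1$ (e.g.\ for the graph of $z\mapsto z^2$), it would only give $\dim_{\R}D_\infty(X)\le 2d$.
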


\begin{proof}
Let $d=\dim_{\C}X$. If $d=0$, o-minimality implies that $X$ is finite. Assume $d>0$. 
Since $X$ is definable, $C_\infty(X)$ is definable and $\dim_{\mathbb R} C_\infty(X) \leq \dim_{\mathbb R} X = 2d$. The result immediately follows from Theorem \ref{thm:definable-cone}. 
\end{proof}

\bibliographystyle{abbrv}
\bibliography{References}

\end{document}